\title[$G_2$ as twisted ring group]{The compact  exceptional Lie algebra $\mathfrak g^c_2$ \\ as a twisted ring group}
\author{Cristina Draper}
\subjclass[2010]{Primary  
17B25; 
16S35. 
Secondary
20C05; 
17B70. 
}
\keywords{ Exceptional Lie algebra, model, ring group, fine grading.}
\thanks{${}^*$ Supported   by the Spanish Ministerio de Ciencia e Innovaci\'on   through projects  PID2019-104236GB-I00 and PID2020-118452GB-I00, all of them with FEDER funds, and by the group FQM-336 by Junta de Andaluc\'{\i}a} 
 \newcommand{\la}{\mathfrak{L}} 
\newcommand{\f}[1]{\mathfrak{#1}}
\newcommand{\bb}[1]{\mathbb{#1}}
\newcommand{\q}{\theta P}  
 \newcommand{\ba}[3]{\begin{pmatrix}E_{#1}^{#2,#3}\\F_{#1}^{#2}\end{pmatrix}}   
  \DeclareMathOperator{\Der}{Der}
\DeclareMathOperator{\ad}{ad}
\theoremstyle{plain} 
\newtheorem{theorem}{Theorem} 
\newtheorem{lemma}[theorem]{Lemma}
\newtheorem{coro}[theorem]{Corollary}
\theoremstyle{definition} 
\newtheorem{definition}[theorem]{Definition}
\newtheorem{notation}[theorem]{Notation}
\begin{document}


\maketitle 
\begin{abstract} 
A new highly symmetrical model of the compact Lie algebra $\f{g}^c_2$ is provided as a twisted ring group for the group  $\mathbb{Z}_2^3$ and the ring $\mathbb{R}\oplus\mathbb{R}$.
The model is self-contained and can be used without previous knowledge on roots, derivations on octonions or cross products. In particular, it provides an orthogonal basis with integer structure constants, consisting entirely of semisimple elements, which is a generalization of the Pauli matrices in $\mathfrak{su}(2)$ 
and of the Gell-Mann matrices in $\mathfrak{su}(3)$. As a bonus, the split  Lie algebra $\mathfrak{g}^*_2$ is also seen as a twisted ring group.
\end{abstract}

	
 \section{Introduction and main results }  
 
 Since the discovery of exceptional Lie algebras at the end of the 19th century, the scientific community  
 has not stopped trying to understand them more deeply. And surprisingly, it has not stopped succeeding.
 The case of $\f g_2$ is paradigmatic.  
 Killing himself (its discoverer)  knew its structure constants, but even he found this information insufficient. 
 The first descriptions are from 1893, when Engel and Cartan independently proved that $G_2$ is the stability group of a certain Pfaff system.
A very elegant   description of $G_2$ as the group preserving a generic 3-form is due to Engel in 1900
(see   \cite{AgriG2} and references therein for more details on this story). This is the starting point for geometrical applications. 
Cartan proved in 1914 that $\f g_2$ is the derivation algebra of the octonion algebra.
The survey article    \cite{NotesG2}    gather many of the relevant results of the algebra $\f g_2$, including aspects as its relation with spinors.
Finding suitable   models of the complex Lie algebra $\f g_2$ and of its real forms makes it possible for mathematicians and physicists to use them. 
In the case of the compact algebra, 
we don't even have a root decomposition, 
so it is even more important to describe it in an accessible way.
Easy models of both the split and the compact real forms of $\f g_2$ as a sum of a classical Lie algebra with a convenient module
 are provided in \cite{modelos}; where the main tool used is multilinear algebra.
 Also a   lovely construction of the   compact real form is provided in \cite{Wilson}, not relying on the octonions.
But each model is adapted to a particular use or to a concrete symmetry. 
In this article our objective is to exploit the $\bb Z_2^3$-symmetry of $\f g_2$.
The approach is novel in that it is   based neither on its subalgebras (as \cite{modelos}) nor on octonions, but in a group.
 Of course, the octonion algebra is hidden in the construction,  but
it does not appear explicitly, so that  any reader with no prior knowledge of exceptional algebras could make use of it.
A particularly relevant fact for physicists is that the construction is not only easy, but also provides a basis with outstanding properties.

  We will describe the compact Lie algebra $\f{g}^c_2$ as a \emph{twisted ring group. }
      Several authors have remarked that the graded-division algebras with homogeneous components
of dimension 1 are nothing else but a twisted group algebras $\bb F^\sigma[G]$, for $\bb F$ a field and $G$ an abelian group. Recall that 
 the twisted group algebra consists of endowing $\bb F[G]=\{\sum\alpha_gg:g\in G,\alpha_g\in \bb F\}$ ($G$ finite) with the product $g\cdot h=\sigma(g,h)(g+h)$,
 for some map  $\sigma\colon G\times G\to \bb F$.
 With this philosophy, the octonion algebra has been described as twisted group algebra  $\bb F[\bb{Z}_2^3]$ in \cite{Helena},
 Clifford algebras as  $\bb F[\bb{Z}_2^n]$ in \cite{twisted}, and the Albert algebra as twisted group algebra  $\bb F[\bb{Z}_3^3]$ in \cite{WeylOyA};
 where $\bb F$ can be taken as $\bb R$ and $\bb C$  in the three cases.   
 This inspires our construction, but there is no grading on $\f{g}_2$ 
 which has all homogeneous components of dimension 1.

 The gradings on the complex Lie algebra $\f g_2$ were described in \cite{GradsG2,gradsg22}. This algebra has 25 gradings up to equivalence, 24 of them are toral, that is, coarsenings of the root decomposition, and only one of them is non-toral: a beautiful $\bb{Z}_2^3$-grading. Its neutral homogeneous component is zero and all the other 7 homogeneous components are two-dimensional. Moreover, the Weyl group of this grading is the whole general linear group $\mathrm{GL}(\bb{Z}_2,3)$; this means that all the homogeneous components of the grading are undistinguishable, since they are interchangeable by means of an inner automorphism. In fact, the homogeneous components  are all Cartan subalgebras, what makes that having this decomposition in a precise way is a highly desirable objective, since every basis consisting of homogeneous elements immediately would satisfy that all of its elements are semisimple.
 The two real Lie algebras of type $G_2$ are  also $\bb{Z}_2^3$-graded \cite{reales}.  
 To be more precise, the octonion division algebra is $\bb{Z}_2^3$-graded (it can be constructed by applying three times the Cayley-Dickson doubling process to the real field, see \cite{gradsO}), so that it induces a $\bb{Z}_2^3$-grading on its derivation algebra, which is the compact Lie algebra of type $G_2$; and the same is true for the split octonion algebra and its derivation algebra, which is the split Lie algebra of type $G_2$. 
 We refer to them as $\f{g}^c_{2}$ and $\f{g}^*_{2}$, respectively.

 All this suggests that any choice of basis of any of these two algebras $\la=\oplus_{g\in \bb{Z}_2^3}\la_g$ consisting of homogeneous elements for the grading would give   something close to a description as twisted ring group for the two-dimensional ring $R=\bb R\oplus\bb R$, once we remove the neutral element in the group, 
 $R[\bb{Z}_2^3\setminus\{e\}]$, for a suitable $\sigma\colon (\bb{Z}_2^3\setminus\{e\})\times (\bb{Z}_2^3\setminus\{e\})\to\mathrm{Bil}(R\times R,R)$. 
In fact, the procedure would consist of identifying the chosen basis $\{X_g,Y_g\}$ of the homogeneous component $\la_g$ with $\{(1,0)g,(0,1)g\}$. 
But this naive  idea does not mean that there is a reasonable way of doing it, at least not a priori. (This will be the procedure we will follow, for the basis defined in Eq.~\eqref{eq_EyF}.)
Material about ring groups are found for instance in \cite{Passman}, and about twisted group algebras in \cite{twisted}, but as far as we know, there is no material about \emph{twisted ring groups}.
At the moment,  we are no interested in the theoretical possibilities of this peculiar notion, but in finding convenient descriptions of the exceptional Lie algebras, in general, and of those of type $G_2$, in particular. 
We are happy to announce that we have achieved such descriptions in Theorem~\ref{main} and Corollary~\ref{main2}.

 Denote the elements in $G=\bb{Z}_2^3$ as
\begin{equation}\label{eq_indices}
 \begin{array}{cccc}
g_0 := (\bar0, \bar0, \bar0), & \quad g_1 :=(\bar1, \bar0, \bar0), & \quad g_2 := (\bar0, \bar1, \bar0), & \quad g_3 :=(\bar0, \bar0, \bar1),
\\
g_4 := (\bar1, \bar1, \bar0), & \quad g_5:= (\bar0, \bar1, \bar1), & \quad g_6:= (\bar1, \bar1, \bar1), & \quad g_7 := (\bar1, \bar0, \bar1),
\end{array}
\end{equation}
so that $g_i+g_{i+1}=g_{i+3}$, where the indices are summed modulo 7.

 \begin{theorem}\label{main}
 Let $P=\frac12\tiny\begin{pmatrix} 1&1 \\ 3&-1 \end{pmatrix}$ and $\theta= \tiny\begin{pmatrix} -1&0 \\ 0& 1\end{pmatrix}.$
   Take  the ring $R=\bb{R}\oplus\bb{R}$, with the component-wise product $(a,b)*(a',b')=(aa',bb')$.
Let the group  $G=\bb{Z}_2^3$ and denote by $G^\times=G\setminus\{(\bar0,\bar0,\bar0)\}=\{g_i:i=1\dots7\}$.
Consider the twisted ring group 
$$
 R^\sigma[G^\times]=\left\{\sum_{i=1}^7r_ig_i:r_i\in R\right\},
$$
with the product defined by,
\begin{enumerate}
\item $[rg_i,r' g_i]=0 $,
\item $[rg_i,r' g_j]=\sigma_{ij}(r,r')(g_i+g_j) $ if $i\ne j$, where $\sigma_{ij}(r,r')=-\sigma_{ji}(r',r)$ and 
$$
\begin{array}{l}
\sigma_{i,i+1}(r,r') =(r*r'P)\q\theta, \\
\sigma_{i,i+2}(r,r') = (rP*r'\q)\theta,  \\
\sigma_{i,i+4}(r,r') =  (r\q *r')\q,  
\end{array}
$$ 
 if $r,r'\in R$, $i=1\dots7$.
\end{enumerate}
Then the defined algebra is the compact Lie algebra $\f{g}^c_{2}$.
 \end{theorem}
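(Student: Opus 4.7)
The strategy is to exhibit a vector-space bijection between $R^\sigma[G^\times]$ and $\f{g}^c_2$ induced by the $\bb{Z}_2^3$-grading, and then verify that the twisted product defined in the theorem coincides, under this bijection, with the Lie bracket. First, I would recall the $\bb{Z}_2^3$-grading $\f{g}^c_2 = \bigoplus_{i=1}^{7} \la_{g_i}$ mentioned in the introduction, with vanishing neutral component and each $\la_{g_i}$ a two-dimensional Cartan subalgebra. Fixing the basis $\{E_{g_i}, F_{g_i}\}$ of $\la_{g_i}$ that will be introduced in Eq.~\eqref{eq_EyF}, define the $\bb R$-linear bijection
$$
\Phi \colon R^\sigma[G^\times] \longrightarrow \f{g}^c_2, \qquad (a,b)\, g_i \longmapsto a\, E_{g_i} + b\, F_{g_i}.
$$
Condition (1) of the theorem is then immediate, since each Cartan subalgebra $\la_{g_i}$ is abelian, so $[E_{g_i}, F_{g_i}] = 0$. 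For condition (2), the grading forces $[\la_{g_i}, \la_{g_j}] \subseteq \la_{g_i + g_j}$ when $i \neq j$, so under $\Phi$ the bracket becomes, for each such pair, a bilinear map $R \times R \to R$, say $\tau_{ij}$; the theorem asserts that $\tau_{ij} = \sigma_{ij}$.

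Next, the antisymmetry $\sigma_{ij}(r,r') = -\sigma_{ji}(r',r)$ matches the antisymmetry of the bracket, which reduces the verification to the three cases $j - i \in \{1,2,4\}$; the remaining cases $j - i \in \{3,5,6\}$ then follow automatically. Moreover, since the Weyl group of the grading is the full $\mathrm{GL}(3,\bb{Z}_2)$, there is an order-$7$ cyclic automorphism of $\f{g}^c_2$ permuting the seven components compatibly with the labelling in~\eqref{eq_indices} (for which $g_i + g_{i+1} = g_{i+3}$). If the basis $\{E_{g_i}, F_{g_i}\}$ is chosen equivariantly with respect to such a cycle, then each of the three families $\tau_{i, i+k}$ collapses to a single representative, say $i = 1$, and the argument reduces to computing, for each $k \in \{1,2,4\}$, the four brackets $[E_{g_1}, E_{g_{1+k}}]$, $[E_{g_1}, F_{g_{1+k}}]$, $[F_{g_1}, E_{g_{1+k}}]$ and $[F_{g_1}, F_{g_{1+k}}]$ inside $\la_{g_1 + g_{1+k}}$, reading off the coefficients in the corresponding basis, and comparing the resulting $2 \times 2$ bilinear forms with the explicit expressions $(r*r'P)\q$, $(rP*r'\q)\theta$ and $(r\q*r')\q$.

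The main obstacle, in my view, is not any individual matrix manipulation but the prior conceptual step of organizing the seven pairs $(E_{g_i}, F_{g_i})$ so that a single pair of fixed $2\times 2$ matrices $(P, \q)$, independent of $i$, governs all forty-two nonzero structure constants. This is the surprising content of the statement, and I expect the bulk of the work, hidden in the construction of~\eqref{eq_EyF}, to consist in arranging the basis so as to be compatible with an appropriate $\bb{Z}_7$-symmetry; once that is in place, the theorem reduces to three short $2\times 2$ matrix identities and the proof is finished.
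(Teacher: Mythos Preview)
Your strategy is correct and matches the paper's: define the linear bijection $\Phi\colon (a,b)g_i\mapsto aE_i^{\ell_i,i+2}+bF_i^{\ell_i}$ (with $\ell_i=\{i,i+1,i+3\}$) and verify Eq.~\eqref{laecuacionacomprobar} for the three cases $j-i\in\{1,2,4\}$; the paper proceeds exactly this way.

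The one substantive difference is in how the uniformity in $i$ is obtained. You propose to invoke an order-$7$ automorphism coming from the Weyl group, reduce to $i=1$, and then compute twelve brackets directly. The paper never reduces to a single $i$: it keeps $s$ arbitrary and shows that the matrices $P$ and $\theta P$ are precisely the change-of-basis matrices between the basis $B_i^{\ell,k}$ of $\la_i$ attached to one line $\ell\ni i$ and the basis attached to another line $\ell'\ni i$ (Lemma~\ref{cbio}). The computation of $[\la_s,\la_{s+1}]$ then proceeds by converting both factors to the common line $\ell_s$, multiplying via the trivial rule~\eqref{mismalinea}, and converting back; the three $\sigma$-formulas fall out as compositions of these basis changes. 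Your route would certainly succeed (the order-$7$ automorphism exists concretely, induced by $e_i\mapsto e_{i+1}$ on $\bb O$), but the paper's route explains \emph{why} the particular matrices $P$ and $\theta P$ appear, rather than merely verifying them a posteriori. Note also a small slip in your last paragraph: the first $\sigma$ is $(r*r'P)\theta P\theta$, not $(r*r'P)\theta P$.
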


\begin{coro}\label{masinfo}
Furthermore, $\{(4,0)g_i,(0,4)g_i:i=1\dots 7\}$ is an orthogonal basis   of semisimple elements of $\f{g}^c_{2}$ with  
 integer structure constants. More precisely, the complete table of products is given by
 $$\begin{array}{l|c|c|}
 &\ad((1,0)g_i)&\ad((0,1)g_i) \\\hline
(a,b)g_{i}&0&0\\
(a,b)g_{i+1}&\frac14 \left( a+3 b,-a-3 b\right) g_{i+3}& \frac14 \left( 3 b-3 a,b-a \right) g_{i+3}\\
(a,b)g_{i+2}& \frac14\left( a-3 b, -a-b\right) g_{i+6}&  \frac14\left( 3 a-9 b,a+b \right) g_{i+6}\\
 (a,b)g_{i+3}& \frac14 \left( 3 b-a, 3 b-a\right) g_{i+1}& \frac14 \left( 3 a+3 b,-a-b\right) g_{i+1}\\
(a,b)g_{i+4}& \frac14 \left(  a-3 b,\ a+b \right) g_{i+5}&  \frac14\left(  -3  a-3b,b-3 a\right) g_{i+5}\\
(a,b)g_{i+5}& \frac14\left(  -a-3 b, a-b \right) g_{i+4}& \frac14 \left( 3 a+9 b,a-b\right) g_{i+4}\\
(a,b)g_{i+6}& \frac14 \left(  3 b-a,\ a+b \right) g_{i+2}& \frac14 \left(  -3a-3b,3 a-b\right) g_{i+2} \\\hline
\end{array}
 $$
Also, $\{\frac12(1,0)g_i,\frac1{2\sqrt{3}}(0,1)g_i:i=1\dots 7\}$ gives an orthonormal basis (norm $-1$) of semisimple elements with  
 structure constants in $\bb Q[\sqrt 3]$.
\end{coro}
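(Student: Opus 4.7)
The plan is to derive everything directly from Theorem~\ref{main}. \textbf{First}, I would assemble the table of products by specializing the three bilinear maps $\sigma_{i,i+1}$, $\sigma_{i,i+2}$, $\sigma_{i,i+4}$ to the four pairs of arguments $(r,r')\in\{(1,0),(0,1)\}\times\{(a,b)\}$. Each evaluation reduces to right-multiplying the row vectors $(1,0)$, $(0,1)$, $(a,b)$ by the $2\times 2$ matrices $P$, $\theta$, $\theta P$ and then forming the component-wise product in $R$. The three rows of the table corresponding to $j\in\{i+3,i+5,i+6\}=\{i-4,i-2,i-1\}$ are then recovered from the antisymmetry relation $\sigma_{ji}(r',r)=-\sigma_{ij}(r,r')$ stated in Theorem~\ref{main}(2). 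Once the table is displayed, the claim about integer structure constants is immediate, since every entry carries a global factor $\frac14$: the rescaled basis $\{(4,0)g_i,(0,4)g_i\}$ absorbs this denominator.

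\textbf{Second}, semisimplicity is automatic: every element of a compact real Lie algebra has $\ad$-semisimple adjoint, and in fact each $\la_{g_i}$ is a (two-dimensional) Cartan subalgebra of $\f g_2^c$, as recalled in the introduction, so the two generators $(1,0)g_i$ and $(0,1)g_i$ are commuting semisimple elements.

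\textbf{Third}, I would prove orthogonality with respect to the Killing form $\kappa$. By invariance and the grading, $\kappa(\la_{g_i},\la_{g_j})=0$ whenever $i\ne j$, so only the six pairings within individual homogeneous components need to be checked. For each $i$, I compute $\kappa((1,0)g_i,(0,1)g_i)=\mathrm{tr}\bigl(\ad((1,0)g_i)\circ\ad((0,1)g_i)\bigr)$ by iterating the table: both adjoint actions send $\la_{g_j}\mapsto\la_{g_{i+j}}\mapsto\la_{g_j}$ for $j\ne i$, yielding a $2\times 2$ matrix on each $\la_{g_j}$ whose trace is read off from the two lines of the table at offsets $j-i$ and $-(j-i)$ modulo $7$. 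Summing the six traces should produce $0$; this is the main computational step, and the only one where I expect genuine (though routine) effort.

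\textbf{Finally}, the same recipe with equal arguments yields $\kappa((1,0)g_i,(1,0)g_i)=-4$ and $\kappa((0,1)g_i,(0,1)g_i)=-12$ (the signs forced by the compactness of $\f g_2^c$), whence the scaling factors $\frac12$ and $\frac1{2\sqrt 3}$ produce an orthonormal basis of norm $-1$. The structure constants of this rescaled basis differ from those of the integer basis $\{(4,0)g_i,(0,4)g_i\}$ only by factors involving the two scalings combined, which explains why they land in $\bb Q[\sqrt 3]$. The main obstacle is purely bookkeeping in Step~3; no conceptual difficulty is anticipated.
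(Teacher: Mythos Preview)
Your proposal is correct and follows essentially the same route as the paper: derive the table by specializing the three $\sigma$'s and using antisymmetry, invoke the grading to reduce orthogonality to the single homogeneous component, and then compute $\mathrm{tr}(\ad((1,0)g_i)\ad((0,1)g_i))=0$, $\mathrm{tr}(\ad((1,0)g_i)^2)=-4$, $\mathrm{tr}(\ad((0,1)g_i)^2)=-12$ from that table. The only cosmetic difference is that the paper carries out the trace computations by explicitly diagonalizing the composed operators and listing their eigenvectors and eigenvalues, whereas you plan to sum the six $2\times2$ block traces directly; both amount to the same arithmetic.
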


 Our main aim is to prove Theorem~\ref{main}. 
 In addition to how beautiful this model looks and how easy it is to use, there is another reason
  to develop it. There exist a fine 
 $\bb Z_3^3$-grading on $\f{f}_4$ and  a  fine $\bb Z_5^3$-grading on $\f{e}_8$ with similar properties, namely,
 all their homogeneous components are abelian and two-dimensional (\cite{EK13}). 
 The challenge, then, is to find a series of  models of the simple complex Lie algebras $\f{g}_2$, $\f{f}_4$ and $\f{e}_8$ as twisted ring groups
 $(\bb{C}\oplus\bb{C})^\sigma[(\bb Z_p^3)^\times]$ for $p=2,3,5,$ respectively, and for convenient $\sigma$'s. 
 Obviously, we should have the first of these models to get a clue about the others.
 
 As a bonus, we will be able to see $\f{g}^*_{2}$  as a twisted ring group too, according to the following description.

 \begin{coro}\label{main2}
For $R=\bb{R}\oplus\bb{R}$ and  $G=\bb{Z}_2^3$, consider the twisted ring group 
$
 R^\sigma[G^\times] =\left\{\sum_{i=1}^7r_ig_i:r_i\in R\right\}$
with the product defined by, 
\begin{enumerate}
\item $[rg_i,r' g_i]=0, $
\item $[rg_i,r' g_j]=\sigma_{ij}(r,r')(g_i+g_j) $ if $i\ne j$, where $\sigma_{ij}(r,r')=-\sigma_{ji}(r',r)$ and
$$
 \sigma (r,r')=\left\{
 \begin{array}{ll}
(r*r'P)\q\theta  \quad
 &\textrm{for }\sigma=\sigma_{12},\sigma_{13},\sigma_{34},\sigma_{45},\sigma_{71},\\
-(r*r'P)\q\theta \quad
 &\textrm{for }\sigma=\sigma_{56},\sigma_{67},\\
 (rP*r'\q)\theta \quad
    &\textrm{for }\sigma=\sigma_{13},\sigma_{24},\sigma_{46},\sigma_{61},\sigma_{72},\\
  -(rP*r'\q)\theta  \quad
    &\textrm{for }\sigma=\sigma_{35},\sigma_{57} ,\\
 (r\q *r')\q \quad
  &\textrm{for }\sigma=\sigma_{15},\sigma_{26},\sigma_{41},\sigma_{52},\sigma_{74},\\
  -(r\q *r')\q \quad
  &\textrm{for }\sigma=\sigma_{37},\sigma_{63} ,
\end{array}\right.
$$
\end{enumerate}
if $r,r'\in R$. Then the   algebra so defined is the split Lie algebra $\f{g}^*_{2}$.
\end{coro}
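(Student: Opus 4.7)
The plan is to derive Corollary~\ref{main2} from Theorem~\ref{main} by a graded twist. Both $\f g^c_2$ and $\f g^*_2$ are derivation algebras of $\bb Z_2^3$-graded real octonion algebras, and the split octonions $\bb O_s$ differ from the division octonions $\bb O$ by a graded Cayley--Dickson sign change: taking the hyperplane $H=\{g_0,g_1,g_2,g_4\}\subset G$ and its nontrivial coset $G\setminus H=\{g_3,g_5,g_6,g_7\}$, one passes from $\bb O$ to $\bb O_s$ by multiplying the product on components indexed by $G\setminus H$ by $-1$. This twist transfers to derivations, predicting that the bracket $[X_{g_i},X_{g_j}]$ of Theorem~\ref{main} should change sign exactly when both $g_i,g_j\in G\setminus H$.

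The first verification is that the sign pattern in the statement matches this prediction. Collecting the minuses, I read $\sigma_{56},\sigma_{67}$ among the $\sigma_{i,i+1}$; then $\sigma_{35},\sigma_{57}$ among the $\sigma_{i,i+2}$; and finally $\sigma_{37},\sigma_{63}$ among the $\sigma_{i,i+4}$. These are precisely the $\binom{4}{2}=6$ unordered pairs within $\{3,5,6,7\}$, distributed across the three types according to the unordered distance between indices mod $7$. Moreover, the twist is a $\bb Z_2$-valued $2$-cochain equal to the characteristic function of $(G\setminus H)\times (G\setminus H)$, which is automatically a cocycle because $G\setminus H$ is a coset of an index-$2$ subgroup; hence Jacobi for the new bracket follows from Jacobi for $\f g^c_2$, term by term along each graded component.

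The main obstacle, and the only genuine content beyond bookkeeping, is to recognise the resulting real Lie algebra as $\f g^*_2$ rather than a second copy of $\f g^c_2$. Since the twist does not affect the complexification, the new algebra is still a real form of $\f g_2(\bb C)$, so it suffices to distinguish it from the compact form. The cleanest route is to compute the signature of the Killing form: the six sign flips turn the negative-definite form of $\f g^c_2$ into an indefinite one, which must then have signature $(8,6)$, identifying the algebra as $\f g^*_2$. An alternative is to exhibit explicitly two commuting elements whose ad-actions have nonzero real eigenvalues, thereby producing a two-dimensional $\bb R$-split torus which is impossible in $\f g^c_2$.
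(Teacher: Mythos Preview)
Your proposal is correct and follows essentially the same route as the paper: both use the $\bb Z_2$-grading induced by the index-$2$ subgroup $H=\{g_0,g_1,g_2,g_4\}$ (the line $\ell_1=\{1,2,4\}$), flip the sign of the bracket on odd--odd pairs, and then identify the resulting real form. The paper phrases the twist as the Cartan construction $\la'=\la_{\bar0}\oplus\mathbf{i}\,\la_{\bar1}$ inside the complexification and simply invokes that this is ``well known'' to yield the split form; you phrase it as a $\{\pm1\}$-valued $2$-cocycle twist, check the sign pattern against the six pairs in $\{3,5,6,7\}$, and distinguish the result from $\f g_2^c$ via the Killing form signature. These are the same argument in different clothing: your cocycle becomes a coboundary over $\bb C$ (via $g\mapsto\mathbf{i}$ on $G\setminus H$), which is exactly the paper's $\mathbf{i}$-rescaling, and your Killing-form check is a concrete substitute for the paper's appeal to the classification of real forms of $\f g_2$.
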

 
Descriptions free of  matrices are provided in Eq.~\eqref{concreto}, to be used if needed.
  Again, $\{\frac12(1,0)g_i,\frac1{2\sqrt{3}}(0,1)g_i:i=1\dots 7\}$ provides an orthonormal basis (norm $\pm1$) of semisimple elements with  
 structure constants in $\bb Q[\sqrt 3]$;
 and 
 $\{(4,0)g_i,(0,4)g_i:i=1\dots 7\}$ is an orthogonal basis of $\f{g}^*_{2}$  of semisimple elements with  
 integer structure constants. In the split algebra, this basis is  less novel, since the existence of the Chevalley basis where all structure constants are integers
is well known. 
 Anyway, in such Chevalley basis $\{h_1,h_2,e_{\alpha},e_{-\alpha}: \alpha\in\Phi^+\}$, for  $\Phi$ the root system relative to the Cartan
 subalgebra $\langle h_1,h_2\rangle$, the sign in $[e_{{\beta }},e_{{\alpha }}]=\pm (p+1)e_{{\beta +\alpha }}$ is not always clear  ($p$
 is the greatest positive integer such that $\alpha -p\beta$ is a root); so that having a concrete realization is still convenient.


 \section{Proofs and tools}\label{compacto}

 Consider the  octonion division algebra $\bb O$, that is, the real algebra with basis $\{e_i:0\le i\le7\}$ where $e_0=1$
 is the unit element, $e_i^2=-1$ and
 $$e_ie_{i+1} = e_{i+3}=-e_{i+1}e_i$$ for all $i$, summing modulo 7.
Thus $\bb O=\oplus_{g\in G} \bb O_g$  is a $G$-grading with $\bb O_{g_i}=\bb Re_i$, 
for the labelling of the elements in $G=\bb Z_2^3$   as in Eq.~\eqref{eq_indices}. For
 $i\ne  j \in I:=\{1,\dots,7\} $, we let $i \ast j$ to be the element in $I$ such that $g_i + g_j = g_{i \ast j}$. 
 This notation and others throughout the paper have been taken from   \cite[\S4.2]{arxiv}.
 Thus,
 $\{i,j,i*j\}$ is a line in the set 
 $$
 \begin{array}{ll}
 {\bf L} &= \{\{i,i+1,i+3\}:i\in I\}\\
 &=\{ \{ 1,2,4   \},\{ 2,3,5   \},\{ 3,4,6   \},\{  4,5,7  \},\{  5,6,1  \},\{  6,7,2  \},\{  7,1,3  \}   \}.
 \end{array}
 $$
For any $\ell =\{i,j,i*j\}\in {\bf L}$, consider $\mathcal Q_\ell=\langle 1,e_i,e_j,e_{i\ast j}\rangle$ the related quaternion subalgebra of $\bb O$.
For any $k\notin\ell$, note that $e_k\in   \mathcal Q_{\ell}^\perp$, so that $\bb O= \mathcal Q_{\ell}\oplus  \mathcal Q_{\ell}e_k$.

Let $\la=\Der(\bb O)$ be the derivation algebra, which is  the compact simple Lie algebra of type $G_2$. This Lie algebra is
$G=\bb Z_2^3$-graded too, for
$$
\la=\oplus_{g\in G} \la_g, \quad \la_g=\{d\in\Der(\bb O):d(\bb O_{h})\subset \bb O_{g+h}\ \forall h\in G\}.
$$
This is the grading on $\f g^c_2$ mentioned in Introduction, with seven homogeneous components $\la_{g_i}\equiv \la_i$ of dimension 2, all of them abelian.
We are interested in choosing a good basis of $\la_i$ for any $i$, in fact, several bases of the same $\la_i$.

\begin{definition} 
Take $\ell \in {\bf L}$ with $i\in\ell$, and $k \notin \ell$. %
Consider the derivations of $\bb O$ given by,  
\begin{equation}\label{eq_EyF}
\begin{array}{lll}
E^{\ell,k}_{i }: &q\mapsto 0, & qe_{k}\mapsto \frac12 (e_{i}q)e_{k},   \\
F^{\ell,k}_{i }: &q\mapsto \frac12[ e_{i},q],\qquad &qe_{k}\mapsto-\frac12 (qe_{i})e_{k}, 
\end{array}
\end{equation}
for any $q\in\mathcal Q_\ell$. These derivations are taken from  \cite{LY}. 
\end{definition}

 \begin{notation}
 For $i\ne  j \in I $, we will say that $i\prec j$ if $e_ie_j=e_{ i\ast j}$. Note that for any $i\ne j$, either $i\prec j$ or $j\prec i$.
 To be more precise, $i\prec i+k$ for $k=1,2,4$ and $i+k\prec i$ for $k=3,5,6$.
 \end{notation}

\begin{lemma}\label{lem} 
The following assertions hold, for each  $\ell \in {\bf L}$ with $i,j\in\ell$, $i\ne j$, and $k \notin \ell$.
\begin{enumerate}

\item[\rm (i)] 
 $B^{\ell,k}_{i }:=\{E^{\ell,k}_{i },F^{\ell,k}_{i }\}$ is a basis of the abelian homogeneous component
$\la_i$.

\item[\rm (ii)]  
$E^{\ell,k}_{i}=E^{\ell,i\ast k}_{i}=-E^{\ell,j \ast k}_{i}=-E^{\ell,i\ast j\ast k}_{i}$; while $F^{\ell}_{i }:=F^{\ell,k}_{i}$ does not depend on the choice of 
$k\notin\ell$.

\item[\rm (iii)] 
If $i\prec j$,   then
$[E^{\ell,k}_{i}, E^{\ell,k}_{j}] = E^{\ell,k}_{i\ast j}$.

\item[\rm (iv)] If $i\prec j$,    then
$[F^\ell_{i}, F^\ell_{j}] = F^\ell_{i\ast j}$.
\end{enumerate}
 \end{lemma}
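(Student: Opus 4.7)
The plan is to reduce every assertion to a finite computation inside the Cayley--Dickson decomposition $\bb O=\mathcal Q_\ell\oplus\mathcal Q_\ell e_k$, exploiting the associativity of the quaternion subalgebra $\mathcal Q_\ell$. Since both $E^{\ell,k}_i$ and $F^{\ell,k}_i$ are defined piecewise on the two summands, every claim becomes a check on the basis vectors $e_h$ and $e_he_k$ with $h\in\ell\cup\{0\}$, and all signs are controlled by the relation $\prec$.

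For (i), the introduction recalls that the homogeneous component $\la_i$ has dimension $2$, so it suffices to show $E^{\ell,k}_i,F^{\ell,k}_i\in\la_i$ and that they are linearly independent. That they are derivations is taken from \cite{LY}; that they have degree $g_i$ is immediate from the formulas, since $\frac12[e_i,e_h]$ is either zero or a scalar multiple of $e_{i\ast h}$, and $\frac12(e_ie_h)e_k\in\bb R\,e_{i\ast h}e_k$. Linear independence is clear, because $E^{\ell,k}_i$ annihilates $\mathcal Q_\ell$ while $F^{\ell,k}_i$ does not. For (ii), the key observation is that $\mathcal Q_\ell e_k=\mathcal Q_\ell e_{k'}$ as subspaces of $\bb O$ for any two $k,k'\notin\ell$, because $\ell\cup\{0\}$ is a subgroup of $\bb{Z}_2^3$ and $I\setminus\ell$ is a single coset $\{k,\,i\ast k,\,j\ast k,\,(i\ast j)\ast k\}$; each corresponding $e_{k'}$ is proportional to some $e_ae_k$ with $a\in\ell\cup\{0\}$. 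On this common complement the four candidates for $E^{\ell,k}_i$ are compared element by element, and $F^{\ell,k}_i=F^{\ell,k'}_i$ is verified independently; both come down to sign bookkeeping via $\prec$.

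For (iii), both sides vanish on $\mathcal Q_\ell$, so only the action on $qe_k$ matters. Associativity inside $\mathcal Q_\ell$ yields
$$
[E^{\ell,k}_i,E^{\ell,k}_j](qe_k)=\tfrac14\bigl((e_ie_j-e_je_i)q\bigr)e_k,
$$
which equals $\tfrac12(e_{i\ast j}q)e_k=E^{\ell,k}_{i\ast j}(qe_k)$ precisely when $i\prec j$. For (iv), on $\mathcal Q_\ell$ the identity $[\ad(e_i),\ad(e_j)]=\ad([e_i,e_j])$, valid in any associative algebra, gives $[F^\ell_i,F^\ell_j]|_{\mathcal Q_\ell}=F^\ell_{i\ast j}|_{\mathcal Q_\ell}$, and on $\mathcal Q_\ell e_k$ the parallel identity $(qe_j)e_i-(qe_i)e_j=q(e_je_i-e_ie_j)$ inside $\mathcal Q_\ell$ produces the required $-\tfrac12(qe_{i\ast j})e_k=F^\ell_{i\ast j}(qe_k)$.

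The main obstacle is the sign bookkeeping in (ii), where four distinct choices of $k\notin\ell$ must be reconciled against the $\prec$-convention; however, each individual computation stays inside a quaternion subalgebra, so the non-associativity of $\bb O$ never intrudes and the checks remain elementary.
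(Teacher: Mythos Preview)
Your proposal is correct and follows essentially the same route as the paper: establish (i) by observing that $E^{\ell,k}_i$ kills $\mathcal Q_\ell$ while $F^{\ell,k}_i$ does not, reduce (ii) to an elementwise comparison on $\mathcal Q_\ell^\perp$ (the paper is slightly more economical, checking only the generator $e_k$ since a derivation is determined on generators), and handle (iii)--(iv) by direct computation using associativity inside $\mathcal Q_\ell$. One caveat: your closing remark that ``the non-associativity of $\bb O$ never intrudes'' is not quite accurate---the verification of (ii) in the paper hinges on the identity $e_i(e_je_s)=-(e_ie_j)e_s$ for $s\notin\ell$, which is precisely where the octonionic non-associativity shows up, so the sign bookkeeping genuinely leaves any single quaternion subalgebra (though it remains elementary via the Cayley--Dickson formula).
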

In particular, we have got six basis of $\la_i$, for any $i\in I$.

 \begin{proof}
 Recall that $\bb O$ is alternative, $\mathcal Q_{\ell}$ is associative and, for any $s\notin\{i,j,i\ast j\}$, we have 
 \begin{equation}\label{antico}
 e_i(e_je_s)=-(e_ie_j)e_s.
 \end{equation}

(i) Clearly $E^{\ell,k}_{i }$ and $F^{\ell}_{i }$ are homogeneous derivations which send $\bb O_{h}$ into $\bb O_{g_i+h}$,
so that they belong to $\la_i$. The linear independence is clear from the following table of actions,  
\begin{equation}\label{tabla1}
\begin{array}{c|ccccccc|}
 &e_i&e_j&e_ie_j&e_k&e_{ i}e_{ k}&e_{ j}e_{ k}&(e_ie_j)e_{ k}\\\hline
2E^{\ell,k}_{i}&0&0&0&e_ie_k&-e_k&(e_ie_j)e_{ k}&-e_{ j}e_{ k}\\
2F^{\ell,k}_{i}&0&2e_ie_j&-2e_j&-e_ie_k&e_k&(e_ie_j)e_{ k}&-e_{ j}e_{ k}\\\hline
\end{array}
\end{equation}
and the result follows since $\dim\la_i=2$.

(ii) The derivations of $\bb O$ are determined by the image of $e_i,e_j,e_k$, since these three elements generate the whole octonion algebra.
As all the maps $2E^{\ell,s}_{i}$ send $\mathcal Q_{\ell}$ to $0$, we check their action on $e_k$:
$$
\begin{array}{l}
2E^{\ell,k}_{i}:e_k\mapsto e_ie_k; \quad\\
2E^{\ell,i*k}_{i}:e_k=-e_i(e_ie_k)\mapsto -e_i^2(e_ie_k)=e_ie_k;\\
2E^{\ell,j*k}_{i}:e_k=-e_j(e_je_k)\mapsto-(e_ie_j)(e_je_k)\stackrel{\eqref{antico}}=((e_ie_j)e_j)e_k=(e_ie_j^2)e_k=-e_ie_k;
\end{array}
$$
and exactly the same argument proves that $2E^{\ell,i*j*k}_{i}(e_k)=-e_ie_k$. 
Analogously, $2F^{\ell,s}_{i}\vert_{\mathcal Q_{\ell} }=\ad e_i$ for all $s\notin\ell$; 
so it just needs to be checked that $2F^{\ell,s}_{i}(e_k)=-e_ie_k$    for all $s\notin\ell$.
This is easy: for instance, 
$$
2F^{\ell,j*k}_{i}(e_k)=2F^{\ell,j*k}_{i}( -e_j(e_je_k))=+(e_je_i)(e_je_k)=-(e_ie_j)(e_je_k)=-e_ie_k.
 $$

(iii)  We now compute $E^{\ell,k}_{i }E^{\ell,k}_{j }$: it sends $q\mapsto 0$ and $qe_k\mapsto \frac14 (e_ie_jq)e_k$, for $q\in\mathcal Q_{\ell}$.
Thus $[E^{\ell,k}_{i },E^{\ell,k}_{j }]$  sends $q\mapsto 0$ and $qe_k\mapsto \frac14 ([e_i,e_j]q)e_k$.
The result follows from $[e_i,e_j]=2e_{i*j}$, since $i\prec j$.

(iv) Since $F^{\ell}_{i }\vert_{\mathcal Q_{\ell}}=\frac12\ad e_i$; then  
 $[F^{\ell}_{i },F^{\ell}_{j }]\vert_{\mathcal Q_{\ell}}=\frac14[\ad e_i,\ad e_j]=\frac14\ad[e_i,e_j]=\frac12\ad e_{i*j}$. And the action on the orthogonal subspace
 is
 $F^{\ell}_{i }F^{\ell}_{j }(qe_k)=\frac14(qe_je_i)e_k$, hence 
  $[F^{\ell}_{i },F^{\ell}_{j }](qe_k)=\frac14(q[e_j,e_i])e_k=-\frac12(qe_{i*j})e_k$.
\end{proof}

Throughout this manuscript, we denote 
 $
 P=\frac12\tiny\begin{pmatrix} 1&1 \\ 3&-1 \end{pmatrix}
 $
 and  
 $
 \theta= \tiny\begin{pmatrix} -1&0 \\ 0& 1\end{pmatrix}.
 $

 \begin{lemma}\label{cbio}
 If $\ell=\{i,j,i*j\}$, $\ell'=\{i,k,i*k\}$ are two different lines in $\bf L$, then \smallskip
\begin{itemize}
\item[(a)]
$
\ba{i}{\ell}{k}
=\frac12\begin{pmatrix}1&1\\3&-1    \end{pmatrix} \begin{pmatrix}E_i^{\ell',j}\\F_i^{\ell'}\end{pmatrix}
=P\ba{i}{\ell'}{j},
$\smallskip

\item[(b)]
$
 \ba{i}{\ell}{j*k}
=\frac12\begin{pmatrix}-1&-1\\3&-1    \end{pmatrix} \begin{pmatrix}E_i^{\ell',j}\\F_i^{\ell'}\end{pmatrix}
=\q\ba{i}{\ell'}{j}.
$\smallskip
\end{itemize}
 \end{lemma}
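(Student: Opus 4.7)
My plan is to reduce both identities to a finite verification on the generating triple $(e_i,e_j,e_k)$. Since $\ell\ne\ell'$ share the index $i$, these three units generate $\bb O$ as an algebra: their pairwise products yield $e_{i\ast j}$, $e_{i\ast k}$ and $e_{j\ast k}$ up to sign, and $e_i(e_je_k)$ supplies the last imaginary unit $e_{i\ast j\ast k}$. Because any derivation of $\bb O$ is determined by its action on a generating set, the matrix equation in (a) need only be checked on these three elements.

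Using Eq.~\eqref{eq_EyF} together with the fact that distinct imaginary units of $\bb O$ anticommute—so that $[e_i,e_m]=2e_ie_m$ for any $m\ne i$, sidestepping any case split on $\prec$—I would tabulate the six relevant values. From the line $\ell$: $E_i^{\ell,k}$ kills $e_i,e_j$ and sends $e_k\mapsto\tfrac12 e_ie_k$; $F_i^\ell$ kills $e_i$, sends $e_j\mapsto e_ie_j$ and $e_k\mapsto-\tfrac12 e_ie_k$. Symmetrically from $\ell'$: $E_i^{\ell',j}$ kills $e_i,e_k$ and sends $e_j\mapsto\tfrac12 e_ie_j$; $F_i^{\ell'}$ kills $e_i$, sends $e_j\mapsto-\tfrac12 e_ie_j$ and $e_k\mapsto e_ie_k$. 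Plugging these into the combinations $\tfrac12(E_i^{\ell',j}+F_i^{\ell'})$ and $\tfrac12(3E_i^{\ell',j}-F_i^{\ell'})$ reproduces, row by row, the actions of $E_i^{\ell,k}$ and $F_i^\ell$ on $e_i,e_j,e_k$, which is (a).

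For (b) no further calculation is required. The four indices $\{k,i\ast k,j\ast k,i\ast j\ast k\}$ exhaust $I\setminus\ell$, so $j\ast k\notin\ell$ and Lemma~\ref{lem}(ii) yields $E_i^{\ell,j\ast k}=-E_i^{\ell,k}$, while $F_i^\ell$ is independent of the second superscript. Multiplying (a) on the left by $\theta=\mathrm{diag}(-1,1)$ therefore sends its left-hand side to $\ba{i}{\ell}{j\ast k}$ and its right-hand side to $\theta P\,\ba{i}{\ell'}{j}$, which is exactly (b). The main obstacle is purely clerical: one must keep the proliferation of $E$- and $F$-superscripts straight and trust that the anticommutativity shortcut absorbs the potential sign ambiguity of $\prec$ in the computation of $[e_i,e_m]$; once this is observed, the identity collapses to a short table.
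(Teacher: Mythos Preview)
Your proof is correct and follows essentially the same approach as the paper: both verify the identity by computing the actions of the derivations involved, and both derive (b) from (a) via Lemma~\ref{lem}(ii) and the sign flip $\theta$. The only difference is cosmetic---the paper tabulates the actions on all seven imaginary units, whereas you economize by checking only on the generating triple $e_i,e_j,e_k$, a reduction the paper itself justifies in its proof of Lemma~\ref{lem}(ii).
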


 \begin{proof}
First we check that $2E_i^{\ell',j}= {E_i^{\ell,k}+F_i^{\ell}}$ and
 $2F_i^{\ell'}= {3E_i^{\ell,k}-F_i^{\ell}}$. This is trivial after   comparing the table of actions in \eqref{tabla1} with
 \begin{equation*}\label{tabla2}
\begin{array}{c|ccccccc|}
 &e_i&e_j&e_ie_j&e_k&e_{ i}e_{ k}&e_{ j}e_{ k}&(e_ie_j)e_{ k}\\\hline
2E^{\ell',j}_{i}&0&e_ie_j&-e_j&0&0&(e_ie_j)e_{ k}&-e_{ j}e_{ k}\\
2F^{\ell'}_{i}&0&-e_ie_j&e_j&2e_ie_k&-2e_k&(e_ie_j)e_{ k}&-e_{ j}e_{ k}\\\hline
\end{array}
\end{equation*}
This second table is computed easily with the help of Eq.~\ref{antico}. Most of the (very easy) computations are left to the reader.
For instance,
$$
2E^{\ell',j}_{i}:e_je_k=-e_ke_j\mapsto-(e_ie_k)e_j=e_i(e_ke_j)=-e_i(e_je_k)=(e_ie_j)e_k.
$$
 This proves (a). 
 Now (b) is an immediate consequence  of   the fact that $E_i^{\ell,j*k}=-E_i^{\ell,k}$ for $\ell=\{i,j,i*j\}$,
so that the first row of $P$ is affected   by a sign change.
 \end{proof}

  \begin{notation}
If $ i\in \ell \in \bf L$,   and $k\notin\ell$, an arbitrary element in $\la_i=\{aE_i^{\ell,k}+bF_i^{\ell}:a,b\in\bb R\}$ can be written as
 $$aE_i^{\ell,k}+bF_i^{\ell}\equiv(a,b)\begin{pmatrix}E_i^{\ell,k}\\F_i^{\ell}\end{pmatrix}.$$
 Now, for $ j\in \ell $, $i\prec j$, the fact $\la_i$ abelian jointly with Lemma~\ref{lem} (iii) and (iv) permit us to claim 
\begin{equation}\label{mismalinea}
 \left[(a,b)\begin{pmatrix}E_{i}^{\ell,k}\\F_{i}^{\ell}\end{pmatrix},
 (a',b')\begin{pmatrix}E_{j}^{\ell,k}\\F_{j}^{\ell}\end{pmatrix}\right]=
 (aa',bb')\begin{pmatrix}E_{i*j}^{\ell,k}\\F_{i*j}^{\ell}\end{pmatrix}.
\end{equation}
This will be very helpful for working \emph{in the same line}, while Lemma~\ref{cbio}   
 will be relevant for handling bases related to different lines.
  \end{notation}

  We are now in a position to prove Theorem~\ref{main}.
  
  \begin{proof}
  For any $i\in I$, let us denote by $\ell_i:= \{i,i+1,i+3\}$. Consider the vector space isomorphism
   $$
   \begin{array}{rcl}
   \varphi\colon R^\sigma[G^\times]&\longrightarrow& \la=\Der(\bb O)=\f{g}^c_{2}\vspace{2pt}\\
    (a,b)g_i&\mapsto & aE_i^{\ell_i,\,i+2}+bF_i^{\ell_i}.
   \end{array}
   $$
  Let us prove that $\varphi$ defines a Lie algebra isomorphism. We  have only to check
 \begin{equation}\label{laecuacionacomprobar}
  [\varphi(rg_j),\varphi(r'g_k)]=\varphi( \sigma_{jk}(r,r')g_{j*k}),
 \end{equation}
 for all $j\prec k$, and all $r,r'\in R$;
 since,
 if $k\prec j$,   we would have
$
[\varphi(rg_j),\varphi(r'g_k)]=-
[\varphi(r'g_k),\varphi(rg_j)]=
\varphi( -\sigma_{kj}(r',r)g_{j*k})=\varphi( \sigma_{jk}(r,r')g_{j*k}).
$
 Thus we need to consider three cases, $(j,k)\in\{(s,s+1),(s+1,s+3),(s+3,s)\}$ (for an arbitrary $s\in I$).
 
 \begin{itemize}
  \item[$\bullet$] First, we apply Lemma~\ref{cbio} (a) to $i=s+1$, $j= s+2$ and $k= s+3$ to get
\begin{equation}\label{repe}
 \varphi(r'g_{s+1})=r'\ba{s+1}{\{ s+1,s+2,s+4 \}}{s+3}=r'P\ba{s+1}{\{ s+1,s+3,s \}}{s+2},
\end{equation}
 so that    
 $$
 \begin{array}{ll}
 \left[\varphi(rg_{ s}),\varphi(r'g_{ s+1})\right]&=\left[r\ba{s}{\ell_s}{s+2},r'\ba{s+1}{\ell_{s+1}}{s+3}\right]
 =\left[r\ba{s}{\ell_s}{s+2},r'P\ba{s+1}{\ell_s}{s+2}\right]\vspace{3pt}\\
 &\stackrel{\eqref{mismalinea}}=r*r'P\ba{s+3}{\ell_s}{s+2}=(r*r'P)\q\ba{s+3}{\ell_{s+3}}{s};
 \end{array}
 $$
 where here we have applied Lemma~\ref{cbio} (b) to $i=s+3$, $j= s$ and $k= s+6$, so that $j*k=s+2$.
 Moreover, Lemma~\ref{lem} (ii) says
 $$\ba{s+3}{\ell_{s+3}}{s}=\theta\ba{s+3}{\ell_{s+3}}{s+5},$$ 
 so that 
 $
 \left[\varphi(rg_{ s}),\varphi(r'g_{ s+1})\right]=\varphi((r*r'P)\q\theta g_{s+3})$. 
 This implies Eq.~\ref{laecuacionacomprobar} for $(j,k)=(s,s+1)=(i,i+1)$.\smallskip

  \item[$\bullet$] Second, we apply Lemma~\ref{cbio} (b) to $i=s+3$, $j= s+4$ and $k= s$, so that $j*k=s+5$ and we get
 \begin{equation}\label{repe2}
 \varphi(r'g_{s+3})=r'\ba{s+3}{\ell_{s+3}}{s+5}=r'\q\ba{s+3}{\{ s+1,s+3,s \}}{s+4}.
\end{equation}
As checked in Eq.~\eqref{repe}, $\varphi(rg_{ s+1})=rP\ba{s+1}{\ell_s}{s+2}=rP\ba{s+1}{\ell_s}{s+4}$, so that
$$
\left[\varphi(rg_{ s+1}),\varphi(r'g_{ s+3})\right]=
\left[ rP\ba{s+1}{\ell_s}{s+4},  r'\q\ba{s+3}{\ell_s}{s+4} \right]
\stackrel{\eqref{mismalinea}}=rP*r'\q\ba{s}{\ell_s}{s+4}.
$$
Since
$$
\ba{s}{\ell_s}{s+4}=\theta \ba{s}{\ell_s}{s+2},
$$
then we conclude 
$
\left[\varphi(rg_{ s+1}),\varphi(r'g_{ s+3})\right]=\varphi((rP*r'\q)\theta g_s).
$ 
This implies Eq.~\ref{laecuacionacomprobar} for $(j,k)=(s+1,s+3)=(i,i+2)$.\smallskip

 \item[$\bullet$]  For the third case, the bulb of the work is done. By Eq.~\eqref{repe2} and Lemma~\ref{lem} (ii),
$$
 \varphi(rg_{s+3})=r\q\ba{s+3}{\ell_s}{s+4}=r\q\ba{s+3}{\ell_s}{s+6}.
 $$
 Also $\varphi(r'g_{ s})=r'\ba{s}{\ell_s}{s+2} =r'\ba{s}{\ell_s}{s+6} $, again by Lemma~\ref{lem} (ii).
 Thus,
$$
\left[\varphi(rg_{ s+3}),\varphi(r'g_{ s})\right]=
\left[r\q \ba{s+3}{\ell_s}{s+6}, r'\ba{s}{\ell_s}{s+6}  \right]=
r\q *r'\ba{s+1}{\ell_s}{s+6}.
$$
Finally, apply Lemma~\ref{cbio} (b) to $i=s+1$, $j= s+3$ and $k= s+4$, so that $j*k=s+6$,
to get
$$
\ba{s+1}{\ell_s}{s+6}=\q\ba{s+1}{\ell_{s+1}}{s+3}. 
$$
Putting all the information together,
$$
\left[\varphi(rg_{ s+3}),\varphi(r'g_{ s})\right]= (r\q *r')\q\ba{s+1}{\ell_{s+1}}{s+3}=\varphi((r\q *r')\q g_{s+1} ).
  $$
 This implies Eq.~\ref{laecuacionacomprobar} for $(j,k)=(s+3,s)=(i,i+4)$.\smallskip
 \end{itemize}
 \end{proof}
 
 Next, we  will demonstrate the assertions on the bases in Corollary~\ref{masinfo}.
 \begin{proof}
 We have proved that 
 $$
  \left[r\ba{j}{\ell_j}{j+2},r'\ba{k}{\ell_k}{k+2}\right]= \sigma_{jk}(r,r') \ba{j*k}{\ell_{j*k}}{j*k+2},
 $$
 for $(j,k)=(i,i+1),(i,i+2),(i,i+4)$, that is, for all $j\prec k$, with
\begin{equation}\label{concreto}
 \begin{array}{l}
 \sigma_{i,i+1}(r,r')=(r*r'P)\q\theta= \frac{1}{4} \left((a-3 b) a'+3 (a+b) b',(b-3 a) b'-(a+b) a'\right),\\
 \sigma_{i,i+2}(r,r')=(rP*r'\q)\theta=\frac{1}{4} \left((a+3 b) \left(a'-3 b'\right),- (a-b) \left(a'+b'\right)\right),\\
\sigma_{i,i+4}(r,r')=(r\q *r')\q=\frac{1}{4}  \left((a-3 b) a'-3 (a+b) b',(a-3 b) a'+(a+b) b'\right).
\end{array}
\end{equation}
 This gives the grid of products in Corollary~\ref{masinfo}.
 With this table, it is easy to calculate the full 
  Killing form
   $\kappa\colon\la\times\la\to\bb R$, defined by $\kappa(x,y)=\mathrm{tr}(\ad(x)\ad(y))$.
For any grading $\la=\oplus_{g\in G}\la_g$, it is a well-known fact that $\kappa(\la_g,\la_h)=0$ if $g+h\ne e$. This implies in our case that
 $\kappa(\la_i,\la_j)=0$ if $i\ne j$.  Fix $i$   an arbitrary index, and let us denote by $F_1=\ad((1,0)g_i)$ and $F_2=\ad((0,1)g_i)$. 
We can tediously check, by following the product rules in Eq.~\eqref{concreto}, that
 $F_2F_1$ is diagonalizable with $\langle (1,0)g_j,(0,1)g_j:j=i,i+1,i+3\rangle$ its $0$-eigenspace and 
 $$
\{ (3,1)g_{i+2},
(1,-1)g_{i+2},
(1,0)g_{i+4},
(0,1)g_{i+4},
(3,-1)g_{i+5},
(1,1)g_{i+5},
(1,0)g_{i+6},
(0,1)g_{i+6}\}
$$
  eigenvectors with respective eigenvalues $\{\frac{1}{4},\frac{-3}{4}, \frac{3}{4},\frac{-1}{4},\frac{-1}{4},\frac{3}{4},\frac{-3}{4},\frac{1}{4}\}$.
This implies that the trace of $F_2F_1$ vanishes, so that  $\kappa((1,0)g_i,(0,1)g_i)=0$, for arbitrary $i$, 
and $ \{(1,0)g_i,(0,1)g_i:i\in I\}$ turns out to be an orthogonal basis for $\kappa$.
Finally, we compute the norm of the elements in this basis. First, $F_1^2$ is diagonalizable with eigenvectors related to:
\begin{itemize}
\item Eigenvalue 0: $\{ (1,0)g_i,(0,1)g_i, (3,-1)g_{i+1},(3,1)g_{i+3} \}$;
\item Eigenvalue $-1$: $\{ (1,1)g_{i+1},(1,-1)g_{i+3}  \}$;
\item Eigenvalue $\frac{-1}4$: $\{  (1,0)g_j,(0,1)g_j: j=i+2,i+4,i+5,i+6\}$;
\end{itemize}
so that $\kappa((1,0)g_i,(1,0)g_i)=\mathrm{tr}(F_1^2)=-4$.
Similarly, $F_2^2$ is diagonalizable with eigenvectors related to:
\begin{itemize}
\item Eigenvalue 0: $\{ (1,0)g_i,(0,1)g_i, (1,1 )g_{i+1},(1,-1 )g_{i+3} \}$;
\item Eigenvalue $-1$: $\{ (3,1 )g_{i+1},(3,1 )g_{i+3}  \}$;
\item Eigenvalue $\frac{-1}4$: $\{  (3,1)g_{i+2},(0,1)g_{i+4},(3,-1)g_{i+5},(0,1)g_{i+6} \}$;
\item Eigenvalue $\frac{-9}4$: $\{   (1,-1)g_{i+2},(1,0)g_{i+4},(1,1)g_{i+5},(1,0)g_{i+6}\}$;
\end{itemize}
so that $\kappa((0,1)g_i,(0,1)g_i)=\mathrm{tr}(F_2^2)=-12$. The rest of the proof follows trivially.
  \end{proof}

\noindent  In fact, we have directly seen that the Killing form is negative definite, as expected, since the Lie algebra $\Der(\bb O)$ is compact.

 Finally, the model for the split algebra described in  Corollary~\ref{main2} follows easily from  the compact model.
  
  \begin{proof}
 If $\la$ is the compact Lie algebra of type $G_2$ with the model described in Theorem~\ref{main}, 
 then $\la=\la_{\bar0}\oplus\la_{\bar1}$ is a $\bb Z_2$-grading for
 $$
 \la_{\bar0}=\la_{ 1}\oplus\la_{2 }\oplus\la_{ 4};
 \qquad
 \la_{\bar1}=\la_{ 3}\oplus\la_{5 }\oplus\la_{6 }\oplus\la_{ 7}.
 $$
 Then it is well known that $\la'=\la_{\bar0}\oplus{\bf i}\la_{\bar1}$ is the split Lie algebra of type $G_2$, $\bb Z_2$-graded too. If
 $\{v_i:i=1\dots 6\}$ is a basis of $\la_{\bar0}$ and $\{v_i:i=7\dots 14\}$  is a basis of $\la_{\bar1}$, then
 $\{v_i:i=1\dots 6\}$ is a basis of $\la'_{\bar0}$ and $\{{\bf i} v_i:i=7\dots 14\}$  is a basis of $\la'_{\bar1}$. 
 Relative to this basis the structure constants are the same 
 as in $\la$, except when we multiply two odd elements, in which case they change sign.
 The grading chosen for our description in Corollary~\ref{main2} has been that one induced by the line $\ell_1=\{1,2,4\}$.
  \end{proof}

 \smallskip
 
{ \bf Acknowledgements } We would like to thank  Alberto Elduque 
for his wonderful suggestion about the indexes $i,i+1,i+3$, which   obviously has  greatly improved the symmetry of the model.

\end{document}